\renewcommand{\dim}{\mbox{dim}\,}
\renewcommand{\dim}{\mbox{dim}\,}
\newcommand{\set}{\mbox{set}\,}
\newcommand{\T}{\mathrm}
\newcommand{\N}{\mathbb{N}}
\newtheorem{thm}{Theorem}[section]
\newtheorem{cor}[thm]{Corollary}
\newtheorem{lem}[thm]{Lemma}
\newtheorem{defn}[thm]{Definition}
\newtheorem{rem}[thm]{Remark}
\numberwithin{equation}{section}
\begin{document}
\bibliographystyle{amsplain}

%\date{}
\title[On vertex decomposable simplicial complexes]{On vertex decomposable simplicial complexes
and their Alexander duals}
\author[S. Moradi and F. Khosh-Ahang]{Somayeh Moradi and Fahimeh Khosh-Ahang}
%%\thanks{}
%\subjclass{13D02, 13P10, 13D40, 13A02}
\address{Somayeh Moradi, Department of Mathematics,
 Ilam University, P.O.Box 69315-516, Ilam, Iran and School of Mathematics, Institute
 for Research in Fundamental Sciences (IPM), P.O.Box: 19395-5746, Tehran, Iran.} \email{somayeh.moradi1@gmail.com}
\address{Fahimeh Khosh-Ahang, Department of Mathematics,
 Ilam University, P.O.Box 69315-516, Ilam, Iran.}
\email{fahime$_{-}$khosh@yahoo.com}

\keywords{Alexander dual, Betti splitting, edge ideal, regularity, vertex
decomposable.\\
}
\subjclass[2000]{Primary 13D02, 13P10;    Secondary 16E05}

\begin{abstract}

\noindent In this paper we study the Alexander dual of a vertex decomposable simplicial complex. We
define the concept of a vertex splittable ideal
and show that a simplicial complex $\Delta$ is  vertex decomposable if and only if $I_{\Delta^{\vee}}$ is
a vertex splittable ideal. Moreover, the properties of vertex splittable ideals are studied. As the main result, it is proved that any vertex splittable ideal has a Betti splitting
and the graded Betti numbers of such ideals are explained with a recursive formula.
As a corollary, recursive formulas for the regularity and projective dimension
of $R/I_{\Delta}$, when $\Delta$ is a vertex decomposable simplicial complex, are given. Moreover, for a vertex decomposable graph $G$,
a recursive formula for the graded Betti numbers of its vertex cover ideal is presented. In special cases, this formula is explained, when $G$
is chordal or a sequentially Cohen-Macaulay bipartite graph.  Finally, among the other things, it is shown that an edge ideal of a graph is vertex splittable if and only if it has linear resolution.
\end{abstract}

\maketitle
\section*{Introduction}

There is a natural correspondence between squarefree monomial ideals in the  polynomial ring $R=k[x_1,\ldots,x_n]$ (over the field $k$)
and simplicial complexes with vertex set $\{x_1,\ldots,x_n\}$ via the Stanley-Reisner correspondence. In this regard Alexander duality plays an important role in the study of Stanley-Reisner rings. Finding Alexander dual concepts and translating a property of a simplicial complex $\Delta$ in the Alexander dual ideal $I_{\Delta^{\vee}}$, are important topics in combinatorial commutative algebra.
Eagon and Reiner in \cite{Eagon} introduced Alexander dual complexes and proved the following interesting result.

\begin{thm}\cite[Theorem 3]{Eagon}\label{EG}
A simplicial complex $\Delta$ is Cohen-Macaulay if and only if $I_{\Delta^{\vee}}$ has a linear resolution.
\end{thm}

Later in \cite{HD} and \cite{HH} shellable and sequentially Cohen-Macaulay complexes were characterized in terms of Alexander duals.
\begin{thm}\cite[Theorem 1.4]{HD}
$\Delta$ is a shellable simplicial complex if and only if $I_{\Delta^{\vee}}$ has linear quotients.
\end{thm}

\begin{thm}\cite[Theorem 2.1]{HH}
$\Delta$ is sequentially Cohen-Macaulay if and only if $I_{\Delta^{\vee}}$ is componentwise linear.
\end{thm}

Vertex decomposability is another topological combinatorial notion such as shella-blity, which is related to the
algebraic properties of the Stanley-Reisner ring of a simplicial complex. Provan and Billera  \cite{Provan+Billera} first introduced the notion of $k$-decomposable for a pure simplicial complex. For $k=0$, this notion is known as a vertex decomposable simplicial complex. This definition was extended to non-pure complexes by Bj\"{o}rner and Wachs in \cite{BW}.
Defined in a recursive manner, vertex decomposable simplicial complexes form a well-behaved class of simplicial complexes and have been studied in many papers.
In \cite{DE}, vertex decomposablity was used in an interesting way to study the algebraic properties of edge ideals and some nice results on edge ideals were obtained by combinatorial topological techniques. It was proved
that the independence complex of
a whiskered graph is a pure vertex decomposable simplicial complex, and hence,
Cohen-Macaulay.
See also \cite{W}, in this regard.
More nice results on vertex decomposable simplicial complexes may be found in \cite{BFHV, BV, CN, KM, Moradi, VT, W} and \cite{W1}. The following implications for a simplicial complex are known.
$$\textrm{Vertex decomposable} \Rightarrow \textrm{Shellable} \Rightarrow \textrm{Sequentially Cohen-Macaulay}.$$

So, inspired by the above results
in conjunction with the characterizations of shellable, sequentially Cohen-Macaulay and Cohen-Macaulay simplicial complexes by means of Alexander dual ideals, it is natural to ask if something similar can be said about vertex decomposable
simplicial complexes. In this paper, we seek a dual concept for this class of simplicial complexes. To this end, we introduce the notion of a vertex splittable ideal, which is shown to be an appropriate dual concept for vertex decomposability and then some nice properties of such ideals are  achieved. By exploiting this dual concept, we refine our results for flag complexes and
 study the minimal free resolution of vertex cover ideal of vertex decomposable graphs.

The paper is organized as follows.
In the first section,
we recall some definitions and theorems that we use
in the sequel. We begin Section 2 with the definition of vertex splittable ideals.  Then it is shown that
$\Delta$ is a vertex decomposable simplicial complex if and only if $I_{\Delta^{\vee}}$ is a vertex splittable ideal (see Theorem \ref{vI}).
Theorem \ref{q} illustrates that vertex splittable ideals have linear quotients. So, one may deduce
that a vertex splittable ideal generated by monomials in the same degrees has a linear resolution.  As another main result, in Theorem \ref{ss}, it is proved that a vertex splittable ideal has a Betti splitting
and so a recursive formula for its graded Betti numbers is presented (see Remark \ref{bettis}). Hence, it is shown that
for a vertex decomposable simplicial complex $\Delta$,
the regularity and projective dimension of $I_{\Delta}$ can be computed by inductive formulas.
 In Section 3, applications of Theorem \ref{ss} to vertex cover ideal of a vertex decomposable graph are given. In Theorem \ref{graphvd} the graded Betti numbers of vertex cover ideal of such graphs are explained by a recursive formula. For two families of vertex decomposable graphs, sequentially Cohen-Macaulay bipartite graphs and chordal graphs, this formula has been stated more precisely. Another application is Theorem
\ref{propchor} which states that if $G$ is a chordal graph, then $I(G^c)$ is a vertex splittable ideal. Using this fact and the characterization of Fr\"{o}berg for edge ideals of graphs with linear resolution, it is shown that an edge ideal $I(G)$ is vertex splittable if and only if
$I(G)$ has a linear resolution. In Corollary \ref{corchor1}, a simple argument is given to show that for a flag complex $\Delta_G$,  the Alexander dual simplicial complex  $\Delta_G^{\vee}$ is
vertex decomposable if and only if it is Cohen-Macaulay and these conditions hold if and only if $I(G)$ is a vertex splittable ideal.

\section{Preliminaries}

Throughout this paper, we assume that $X=\{x_1, \dots, x_n\}$, $\Delta$ is a simplicial complex on the vertex set $X$, $k$ is a field, $R=k[X]$ is the ring of polynomials in the variables $x_1, \dots, x_n$ and $I$ is a monomial ideal of $R$. For a monomial ideal $I$,
the unique set of minimal generators of $I$ is denoted by $\mathcal{G}(I)$.

In this section, we recall some preliminaries which are needed in the sequel. We begin with definition of a vertex decomposable simplicial complex. To this aim, we need to recall the definition of the \textbf{link} and the \textbf{deletion} of a face in $\Delta$.
For a simplicial complex $\Delta$ and $F\in \Delta$, the link of $F$ in
$\Delta$ is defined as $$\T{lk}_{\Delta}(F)=\{G\in \Delta: G\cap
F=\emptyset, G\cup F\in \Delta\},$$ and the deletion of $F$ is the
simplicial complex $$\T{del}_{\Delta}(F)=\{G\in \Delta: G\cap
F=\emptyset\}.$$

\begin{defn}\label{1.1}
{\rm A simplicial complex $\Delta$ is  called \textbf{vertex decomposable} if
$\Delta$ is a simplex, or $\Delta$ contains a vertex $x$ such that
\begin{itemize}
\item[(i)] both $\T{del}_{\Delta}(x)$ and $\T{lk}_{\Delta}(x)$ are vertex decomposable, and
\item[(ii)] any facet of $\T{del}_{\Delta}(x)$ is a facet of $\Delta$.
\end{itemize}
A vertex $x$ which satisfies condition (ii) is called a
\textbf{shedding vertex} of $\Delta$.}
\end{defn}

\begin{rem}
{\rm It is easily seen that $x$ is a shedding vertex of $\Delta$ if and only if no facet of $\T{lk}_{\Delta}(x)$ is a facet of $\T{del}_{\Delta}(x)$.}
\end{rem}

For a $\mathbb{Z}$-graded $R$-module $M$, the \textbf{Castelnuovo-Mumford regularity} (or simply regularity)
of $M$ is defined as
$$\T{reg}(M) := \max\{j-i: \ \beta_{i,j}(M)\neq 0\},$$
and the \textbf{projective dimension} of $M$ is defined as
$$\T{pd}(M) := \max\{i:\ \beta_{i,j}(M)\neq 0 \ \text{for some}\ j\},$$
where $\beta_{i,j}(M)$ is the $(i,j)$-th graded Betti number of $M$.

The notion of Betti splitting for monomial ideals was introduced in \cite{FHV} as follows.
\begin{defn} \cite[Definition 1.1]{FHV}
{\rm
Let $I$, $J$ and $K$ be monomial ideals in $R$ such that $\mathcal{G}(I)$ is the disjoint union of $\mathcal{G}(J)$ and $\mathcal{G}(K)$.
Then $I=J+K$ is a \textbf{Betti splitting} if $$\beta_{i,j}(I)=\beta_{i,j}(J)+\beta_{i,j}(K)+\beta_{i-1,j}(J\cap K),$$
for all $i\in \N$ and degrees $j$.}
\end{defn}

When $I=J+K$ is a Betti splitting, important homological invariants of $I$ are related to those invariants of the smaller ideals.

\begin{thm}(See \cite[Corollary 2.2]{FHV}.)\label{s}
Let $I=J+K$ be a Betti splitting. Then
\begin{itemize}
\item[(i)] $\T{reg}(I)=\max\{\T{reg}(J),\T{reg}(K),\T{reg}(J\cap K)-1\}$, and
\item[(ii)] $\T{pd}(I)=\max\{\T{pd}(J),\T{pd}(K),\T{pd}(J\cap K)+1\}$.
\end{itemize}
\end{thm}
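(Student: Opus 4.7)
The plan is to argue directly from the defining formulas
\[
\T{reg}(M) = \max\{j-i : \beta_{i,j}(M) \neq 0\}, \quad \T{pd}(M) = \max\{i : \beta_{i,j}(M) \neq 0 \text{ for some } j\},
\]
using the Betti splitting relation as the only nontrivial input.

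The key observation is that because $\beta_{i,j}(J)$, $\beta_{i,j}(K)$, and $\beta_{i-1,j}(J \cap K)$ are all non-negative integers, the Betti splitting equality
\[
\beta_{i,j}(I) = \beta_{i,j}(J) + \beta_{i,j}(K) + \beta_{i-1,j}(J \cap K)
\]
forces $\beta_{i,j}(I) \neq 0$ if and only if at least one of the three terms on the right is nonzero. Thus the support of the Betti table of $I$ is exactly the union of the supports of the Betti tables of $J$ and $K$ together with a homological shift of the support of the Betti table of $J \cap K$.

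For part (i), the plan is to split the max computing $\T{reg}(I)$ into three pieces, according to which of the three summands is nonzero. The first two pieces yield $\T{reg}(J)$ and $\T{reg}(K)$ immediately. For the third, I would reindex by $i' = i-1$: the condition $\beta_{i-1,j}(J \cap K) \neq 0$ becomes $\beta_{i',j}(J \cap K) \neq 0$, while the quantity $j - i$ becomes $j - i' - 1$, so this piece contributes $\T{reg}(J \cap K) - 1$. For part (ii), the same reindexing applied to $\max\{i\}$ gives $\max\{i' + 1\} = \T{pd}(J \cap K) + 1$ for the third contribution.

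The argument contains no real obstacle: once the definitions and the Betti splitting formula are in hand, the proof is a bookkeeping exercise with a max taken over three cases. The only subtlety is the homological shift — the $+1$ shift on the $J \cap K$ term translates to a $-1$ for regularity and to a $+1$ for projective dimension — and tracking this sign correctly is the one place a careless reader could slip.
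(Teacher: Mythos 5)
Your argument is correct: since all graded Betti numbers are non-negative, the Betti splitting identity makes the support of the Betti table of $I$ the union of those of $J$, $K$, and a homologically shifted copy of that of $J\cap K$, and the reindexing $i'=i-1$ correctly produces the $-1$ for regularity and the $+1$ for projective dimension. The paper itself gives no proof of this statement (it is quoted from Francisco--H\`a--Van Tuyl, Corollary 2.2), and your bookkeeping derivation is essentially the standard one found there, so there is nothing to add.
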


For a squarefree monomial ideal $I=( x_{11}\cdots
x_{1n_1},\ldots,x_{t1}\cdots x_{tn_t})$, the \textbf{Alexander dual ideal} of $I$, denoted by
$I^{\vee}$, is defined as
$$I^{\vee}:=(x_{11},\ldots, x_{1n_1})\cap \cdots \cap (x_{t1},\ldots, x_{tn_t}).$$
For a simplicial complex $\Delta$ with vertex set $X$, the \textbf{Alexander dual simplicial complex} associated to $\Delta$ is defined as
$$\Delta^{\vee}=\{X\setminus F:\ F\notin \Delta\}.$$
For a subset $C\subseteq X$, by $x^C$ we mean the monomial $\prod_{x\in C} x$. The set of all facets of a simplicial complex $\Delta$
is denoted by $\mathcal{F}(\Delta)$.
One can see that
$$(I_{\Delta})^{\vee}=(x^{F^c} \ : \ F\in \mathcal{F}(\Delta)), $$
where $I_{\Delta}$ is the Stanley-Reisner ideal associated to $\Delta$ and $F^c=X\setminus F$.
Moreover, one can see that  $(I_{\Delta})^{\vee}=I_{\Delta^{\vee}}$.

Let $G$ be a graph with vertex set $V(G)$ and edge set $E(G)$. The edge ideal of $G$ is defined as the ideal $I(G)=(x_ix_j:\ \{x_i,x_j\}\in E(G))$. It is easy to see that $I(G)$ can be viewed as the Stanley-Reisner ideal of the simplicial complex
$$\Delta_G=\{F\subseteq V(G):\ e\nsubseteq F, \text{ for each } e\in E(G)\},$$ i.e., $I(G)=I_{\Delta_G}$. The simplicial complex $\Delta_G$ is called the \textbf{independence complex} of $G$.
Moreover, the Alexander dual of  $I(G)$ is called the \textbf{vertex cover ideal} of $G$.

A graded $R$-module $M$ is called
\textbf{sequentially Cohen--Macaulay} (over $k$) if there exists a
finite filtration of graded $R$-modules $$0=M_0\subset M_1\subset
\cdots \subset M_r=M$$ such that each $M_i/M_{i-1}$ is
Cohen--Macaulay and
$$\dim(M_1/M_0)<\dim(M_2/M_1)<\cdots<\dim(M_r/M_{r-1}).$$

A simplicial complex $\Delta$ is called sequentially Cohen--Macaulay if the Stanley-Reisner ring $R/I_{\Delta}$ is
sequentially Cohen--Macaulay. Also, we call a graph $G$ sequentially Cohen--Macaulay (resp. vertex decomposable), if $\Delta_G$ is a sequentially Cohen--Macaulay (resp. vertex decomposable) simplicial complex.

The following theorem, which was proved in \cite{T}, is one of our
main tools in the study of projective dimension and regularity of the ring $R/I_{\Delta}$.

\begin{thm}\cite[Theorem 2.1]{T} \label{1.3}
Let $I$ be a squarefree monomial ideal. Then
$\T{pd}(I^{\vee})=\T{reg}(R/I)$.
\end{thm}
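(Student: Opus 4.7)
The plan is to derive the equality from Hochster's formula combined with combinatorial Alexander duality. Writing $I=I_\Delta$ so that $I^\vee=I_{\Delta^\vee}$, Hochster's formula applied to the Stanley--Reisner ring $R/I_{\Delta^\vee}$, together with the standard shift $\beta_{i,j}(I^\vee)=\beta_{i+1,j}(R/I^\vee)$, expresses the Betti numbers of $I^\vee$ as
$$\beta_{i,j}(I^\vee)=\sum_{W\subseteq X,\,|W|=j}\T{dim}_k\,\tilde{H}_{j-i-2}((\Delta^\vee)|_W;k).$$
The corresponding Hochster formula for $R/I_\Delta$ reads
$$\beta_{i',j}(R/I_\Delta)=\sum_{|W|=j}\T{dim}_k\,\tilde{H}_{j-i'-1}(\Delta|_W;k).$$

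The key step is to match the summands via a duality identity. Invoking combinatorial Alexander duality, which exchanges reduced homology of a complex with reduced cohomology of its Alexander dual with a shift by the size of the ground set, together with the fact that homology and cohomology over the field $k$ have the same dimension, one identifies each summand in the first display with a corresponding summand involving induced subcomplexes of $\Delta$. After setting $i'=j-i$ and $\ell=i-1$, the indices match up to give the Betti-number identity $\beta_{i,j}(I^\vee)=\beta_{j-i,j}(R/I_\Delta)$.

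Taking the maximum of the supports of the two Betti tables then yields
$$\T{pd}(I^\vee)=\max\{i:\beta_{i,j}(I^\vee)\neq 0\}=\max\{j-i':\beta_{i',j}(R/I_\Delta)\neq 0\}=\T{reg}(R/I_\Delta).$$
The hard part will be the Alexander duality step: the induced subcomplex $(\Delta^\vee)|_W$ is \emph{not} simply the dual of $\Delta|_W$ taken within $W$ (a short check shows the naive identification fails), so one must carefully identify the reduced homology of $(\Delta^\vee)|_W$ with that of an appropriate complex built from $\Delta$ and $W$ (an antistar- or link-type construction at $X\setminus W$), keeping track of the offsets so that the homological degree shift $i\leftrightarrow j-i$ emerges correctly. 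Once this identification is in hand, the rest is bookkeeping and the conclusion follows by relabeling the Betti tableau.
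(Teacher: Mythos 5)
The paper does not prove this statement at all: it is quoted from Terai \cite[Theorem 2.1]{T}, so your attempt has to be judged as a from-scratch proof of Terai's theorem. The route you pick (Hochster's formula plus combinatorial Alexander duality) is the standard one, but as written the argument contains a step that is simply false. The identity $\beta_{i,j}(I^{\vee})=\beta_{j-i,j}(R/I_{\Delta})$ that you extract by ``matching the summands'' does not hold in general: for $I=(x_1x_2)$ in $k[x_1,x_2]$ one has $I^{\vee}=(x_1,x_2)$, so $\beta_{0,1}(I^{\vee})=2$ while $\beta_{1,1}(R/I)=0$. (An equality of this shape is true only for the \emph{extremal} Betti numbers --- that is the Bayer--Charalambous--Popescu refinement of Terai's theorem --- and it cannot be obtained by reindexing Hochster's sums.) Consequently the final step, ``taking the maximum of the supports of the two Betti tables,'' rests on a false premise, even though the conclusion it is meant to deliver is correct.

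The duality step you defer, once carried out, gives $(\Delta^{\vee})|_W=\Gamma_W^{\vee}$ with the dual taken inside the ground set $W$, where $\Gamma_W=\{G\subseteq W:\ G\cup(X\setminus W)\in\Delta\}$ equals $\T{lk}_{\Delta}(X\setminus W)$ when $X\setminus W\in\Delta$ and is void otherwise; Alexander duality inside $W$ then converts Hochster's formula into $\beta_{i,W}(I^{\vee})=\dim_k\tilde{H}_{i-1}(\T{lk}_{\Delta}(X\setminus W);k)$. Notice that the right-hand side is governed by \emph{links} of $\Delta$, whereas $\T{reg}(R/I_{\Delta})$ is read off Hochster's formula through \emph{induced subcomplexes} $\Delta|_W$. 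The actual content of Terai's theorem is the comparison
$$\max\{d:\ \tilde{H}_{d-1}(\T{lk}_{\Delta}(\sigma);k)\neq 0\ \text{for some}\ \sigma\in\Delta\}=\max\{d:\ \tilde{H}_{d-1}(\Delta|_W;k)\neq 0\ \text{for some}\ W\subseteq X\},$$
and this does not follow from bookkeeping: the two families of summands are genuinely different, which is exactly why the term-by-term Betti identity fails. Terai closes this gap by routing through Hochster's local cohomology formula (the links compute $H^i_{\fm}(R/I_{\Delta})$, from which $\T{reg}(R/I_{\Delta})$ can be read off); one can also argue with long exact sequences relating $\Delta|_W$ and links. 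Your plan is salvageable, but this comparison is the missing idea, and the intermediate identity $\beta_{i,j}(I^{\vee})=\beta_{j-i,j}(R/I_{\Delta})$ must be discarded.
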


\begin{defn}\label{1.2}
{\rm
A monomial ideal $I$ in the ring $R=k[x_1,\ldots,x_n]$ has \textbf{linear quotients} if there exists an ordering $f_1, \dots, f_m$ on the minimal generators of $I$ such that the colon ideal $(f_1,\ldots,f_{i-1}):(f_i)$ is generated by a subset of $\{x_1,\ldots,x_n\}$ for all $2\leq i\leq m$. We show this ordering by $f_1<\dots <f_m$ and we call it an order of linear quotients on $\mathcal{G}(I)$.}
\end{defn}

A monomial ideal $I$ generated by monomials of degree $d$ has a \textbf{linear resolution} if $\beta _{i,j}(I)=0$ for all $j\neq i+d$. Linear quotients is a strong tool to determine some classes of ideals with linear resolution. The main tool in this way is the following lemma.

\begin{lem}(See \cite[Lemma 5.2]{F}.)\label{Faridi}
Let $I=(f_1, \dots, f_m)$ be a monomial ideal with linear quotients such that all $f_i$'s are of the same degree. Then $I$ has a linear resolution.
\end{lem}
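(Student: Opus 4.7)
The plan is to prove Lemma \ref{Faridi} by induction on the number $m$ of minimal generators, using the mapping cone construction together with the Koszul complex of a variable-generated ideal. The base case $m = 1$ is trivial, since a principal ideal generated by a single form of degree $d$ has the resolution $0 \to R(-d) \to (f_1) \to 0$, which is linear.

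For the inductive step, set $I' = (f_1, \ldots, f_{m-1})$ and $L = (I' : f_m)$. By the linear quotients hypothesis, $L$ is generated by a subset of $\{x_1, \ldots, x_n\}$, so the Koszul complex on its generators is a (minimal) linear free resolution $\mathbb{G}$ of $R/L$; in particular $\mathbb{G}_i$ is concentrated in degree $i$. By the induction hypothesis applied to $I'$ (whose generators also share the common degree $d$), we have a minimal linear resolution $\mathbb{F}$ of $R/I'$ with $\mathbb{F}_0 = R$ and $\mathbb{F}_i$ concentrated in degree $i+d-1$ for $i \geq 1$. Consider the short exact sequence
$$0 \lo R/L\,(-d) \xrightarrow{\ \cdot f_m\ } R/I' \lo R/I \lo 0,$$
where the shift by $-d$ records that multiplication by $f_m$ raises degrees by $d$. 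Lift this map to a chain map $\mathbb{G}(-d) \to \mathbb{F}$ by standard homological algebra.

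The key step is to form the mapping cone $\mathbb{C}$ of this chain map, which is a (not necessarily minimal) free resolution of $R/I$. One has $\mathbb{C}_0 = R$ and, for $i \geq 1$,
$$\mathbb{C}_i \;=\; \mathbb{F}_i \oplus \mathbb{G}(-d)_{i-1},$$
where $\mathbb{F}_i$ is generated in degree $i+d-1$ and $\mathbb{G}(-d)_{i-1}$ is generated in degree $(i-1)+d = i+d-1$. Thus every generator of $\mathbb{C}_i$ in homological degree $i\ge 1$ sits in degree $i+d-1$, so $\mathbb{C}$ is a linear free resolution of $R/I$. Since the minimal free resolution is obtained as a direct summand of any non-minimal one, it too is linear, and therefore $I$ has a linear resolution.

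The only real obstacle is bookkeeping the internal degrees so that the two summands of $\mathbb{C}_i$ land in the same strand — this is exactly where the hypothesis that all $f_i$ share the common degree $d$ is used, since it ensures both the shift applied to $\mathbb{G}$ and the starting degree of $\mathbb{F}$ are calibrated so that the linear strand is preserved. Without equal degrees, the two contributions in the mapping cone would occupy different strands and linearity would fail.
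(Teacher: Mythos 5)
Your mapping-cone argument is correct: the short exact sequence $0 \to (R/L)(-d) \to R/I' \to R/I \to 0$, the Koszul resolution of $R/L$ for the variable-generated colon ideal $L$, and the degree bookkeeping placing both summands of $\mathbb{C}_i$ in degree $i+d-1$ are all sound, and extracting the minimal resolution as a direct summand finishes the proof. The paper gives no proof of this lemma, only a citation to Faridi, and your argument is essentially the standard one found in that reference and in the mapping-cone literature, so there is nothing to contrast.
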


\section{Vertex decomposability and vertex splittable ideals}
In this section we study the ideal $I_{\Delta^{\vee}}$ for a vertex decomposable simplicial complex $\Delta$. We introduce the concept of a vertex splittable ideal and show that a simplicial complex $\Delta$ is vertex decomposable if and only if $I_{\Delta^\vee}$ is a vertex splittable ideal. Also, we prove that vertex splittable ideals have a Betti splitting and have linear
quotients. This gives
us information about some homological invariants of $I_{\Delta^\vee}$ such as Betti numbers.

\begin{defn}\label{dvs}
{\rm A monomial ideal $I$ in $R=k[X]$ is called \textbf{vertex splittable} if it can be obtained by the following recursive procedure.
\begin{itemize}
\item[(i)] If $u$ is a monomial and $I=(u)$, $I=(0)$ or $I=R$, then $I$ is a vertex splittable ideal.
\item[(ii)] If there is a variable $x\in X$ and vertex splittable ideals $I_1$ and $I_2$ of $k[X\setminus \{x\}]$ so that $I=xI_1+I_2$, $I_2\subseteq I_1$ and $\mathcal{G}(I)$ is the disjoint union of $\mathcal{G}(xI_1)$ and $\mathcal{G}(I_2)$, then $I$ is a vertex splittable ideal.
\end{itemize}
With the above notations if $I=xI_1+I_2$ is a vertex splittable ideal, then $xI_1+I_2$ is called a \textbf{vertex splitting} for $I$ and $x$ is called a \textbf{splitting vertex} for $I$.}
\end{defn}

Recently, the notion of a $k$-decomposable ideal was introduced in \cite{RY} and it was proved that it is the dual concept for k-decomposable simplicial complexes. In the case
$k=0$, $0$-decomposable simplicial complexes are precisely vertex decomposable simplicial complexes. Considering $k=0$ in \cite[Definition 2.3]{RY}, one can see that a shedding monomial $u$ in a $0$-decomposable ideal $I$
necessarily should satisfy the property $I_u\neq (0)$, where $I_u=(v\in \mathcal{G}(I):\ [u,v]=1)$ and $[u,v]$ is the greatest common divisor of $u$ and $v$, while in Definition \ref{dvs}, it is not the case, i.e., the way that a monomial ideal
splits in Definition \ref{dvs} is different from one in \cite[Definition 2.3]{RY}. For example let $I=(xx_1,\ldots,xx_n)$. Then
$I=x(x_1,\ldots,x_n)$. Setting $I_1=(x_1,\ldots,x_n)$ and $I_2=(0)$  with the same notations of the above definition, it is easy to see that $x$ is a splitting vertex for $I$, while it is not a shedding monomial in the sense of \cite[Definition 2.1]{RY}, since $I_x=(0)$.

First we prove the following lemma.
\begin{lem}\label{v}
Let $\Delta$ be a simplicial complex on the set $X$, $x\in X$  be a shedding vertex of $\Delta$, $\Delta_1=\T{del}_{\Delta}(x)$
and $\Delta_2=\T{lk}_{\Delta}(x)$. Then $$I_{\Delta^{\vee}}=xI_{\Delta_1^{\vee}}+I_{\Delta_2^{\vee}} \ \T{and} \ I_{\Delta_2^{\vee}}\subseteq I_{\Delta_1^{\vee}}.$$
\end{lem}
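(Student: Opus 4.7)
The plan is to use the description $I_{\Delta^{\vee}} = (x^{F^{c}} : F \text{ is a facet of } \Delta)$ and partition this generating set according to whether $x \in F$ or not. The key preliminary observation, which I would establish first, is that the shedding hypothesis (ii) forces the facets of $\Delta_1 = \del_\Delta(x)$, viewed as a complex on $X \setminus \{x\}$, to be precisely those facets of $\Delta$ which do not contain $x$, while the facets of $\Delta_2 = \lk_\Delta(x)$ are always $\{F \setminus \{x\} : F \text{ is a facet of } \Delta \text{ with } x \in F\}$. The second identification is standard, but the first is exactly where the shedding condition is used: without (ii) one could have a face $G$ that is maximal in $\del_\Delta(x)$ yet satisfies $G \cup \{x\} \in \Delta$, and such $G$ would not be a facet of $\Delta$.

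With these dictionaries in hand, the first equality would follow by rewriting each generator. For a facet $F$ of $\Delta$ with $x \in F$, the complement $F^{c}$ is contained in $X \setminus \{x\}$ and equals $(X \setminus \{x\}) \setminus (F \setminus \{x\})$, so $x^{F^{c}}$ is exactly the generator of $I_{\Delta_2^{\vee}}$ attached to the facet $F \setminus \{x\}$ of $\Delta_2$. For a facet $F$ of $\Delta$ with $x \notin F$, one has $x^{F^{c}} = x \cdot x^{(X \setminus \{x\}) \setminus F}$, which is $x$ times the generator of $I_{\Delta_1^{\vee}}$ attached to $F$, now viewed as a facet of $\Delta_1$. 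Reading the same correspondences backwards gives the reverse inclusion, and it is precisely here that shedding is needed, to guarantee that every generator of $x I_{\Delta_1^{\vee}}$ really arises from a facet of $\Delta$ rather than from some spurious maximal face of $\del_\Delta(x)$.

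For the inclusion $I_{\Delta_2^{\vee}} \subseteq I_{\Delta_1^{\vee}}$, the idea is simply that every facet of $\Delta_2$ is still a face of $\Delta_1$. Given a facet $H$ of $\Delta_2$, we have $H \in \Delta$ and $x \notin H$, so $H \in \Delta_1$; extending $H$ to a facet $G \supseteq H$ of $\Delta_1$ yields $(X \setminus \{x\}) \setminus G \subseteq (X \setminus \{x\}) \setminus H$, so the generator of $I_{\Delta_1^{\vee}}$ attached to $G$ divides the generator of $I_{\Delta_2^{\vee}}$ attached to $H$, placing the latter in $I_{\Delta_1^{\vee}}$.

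I expect the only real obstacle to be bookkeeping: keeping straight two conventions for complementation, one inside $X$ (for $\Delta$) and one inside $X \setminus \{x\}$ (for $\Delta_1$ and $\Delta_2$), and invoking the shedding hypothesis at exactly the point where generators of $x I_{\Delta_1^{\vee}}$ must be matched with generators of $I_{\Delta^{\vee}}$ coming from facets of $\Delta$ that omit $x$. Once those identifications are made carefully, both assertions reduce to direct comparison of monomials.
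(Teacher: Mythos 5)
Your proposal is correct and follows essentially the same route as the paper: identify the facets of $\del_\Delta(x)$ with the facets of $\Delta$ omitting $x$ (using the shedding hypothesis exactly there), match generators of $I_{\Delta^{\vee}}$ accordingly, and prove the inclusion by extending a facet of $\lk_\Delta(x)$, which is a face of $\del_\Delta(x)$, to a facet of $\del_\Delta(x)$. No gaps.
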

\begin{proof}
Let $\mathcal{F}(\Delta)=\{F_1,\ldots,F_m\}$ and $X'=X\setminus\{x\}$. We have $I_{\Delta^{\vee}}=(x^{X\setminus F_1},\ldots,x^{X\setminus F_m})$. Without loss of generality assume that $F_1,\ldots,F_k$ $(k\leq m)$ are all the facets of $\Delta$ containing $x$. Since $x$ is a shedding vertex of $\Delta$, one can see that
$\Delta_1=\langle F_{k+1},\ldots,F_m\rangle$. Also $\Delta_2=\langle F_1\setminus \{x\},\ldots,F_k\setminus \{x\}\rangle$. Therefore  $I_{\Delta_1^{\vee}}=(x^{X'\setminus F_{k+1}},\ldots,x^{X'\setminus F_m})$ and any generator of $I_{\Delta_2^\vee}$ is of the form $x^{X'\setminus (F_i\setminus \{x\})}=x^{X\setminus F_i}$ for any $1\leq i\leq k$. So $I_{\Delta_2^{\vee}}=(x^{X\setminus F_1},\ldots,x^{X\setminus F_k})$. Thus $I_{\Delta^{\vee}}=xI_{\Delta_1^{\vee}}+I_{\Delta_2^{\vee}}$.

To prove the last assertion, let $x^{X'\setminus (F_i\setminus \{x\})}$ be a generator of $I_{\Delta_2^{\vee}}$ for some
$1\leq i\leq k$. Since $F_i\setminus \{x\}$ is not a facet of $\T{del}_{\Delta}(x)$, there is a facet $F_j$ of $\T{del}_{\Delta}(x)$ such that
$F_i\setminus \{x\}\subseteq F_j$. Hence $x^{X'\setminus (F_i\setminus \{x\})}\in (x^{X'\setminus F_j})\subseteq I_{\Delta_1^{\vee}}$ which ensures that $I_{\Delta_2^{\vee}}\subseteq I_{\Delta_1^{\vee}}$.
\end{proof}

The following theorem characterizes when $\Delta$ is vertex decomposable
in terms of the Alexander dual of $I_{\Delta}$.

\begin{thm}\label{vI}
A simplicial complex $\Delta$ is vertex decomposable if and only if $I_{\Delta^\vee}$ is a vertex splittable ideal.
\end{thm}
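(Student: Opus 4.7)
My plan is to prove both implications by simultaneous induction on $n = |X|$, with Lemma \ref{v} as the pivot in each inductive step. Throughout, when a vertex $x \in X$ is singled out, I write $F_1, \ldots, F_k$ for the facets of $\Delta$ containing $x$ and $F_{k+1}, \ldots, F_m$ for those not containing $x$. For the base case, note that $\Delta$ is a simplex (with unique facet $F$) if and only if $I_{\Delta^{\vee}} = (x^{X \setminus F})$ is principal; in this situation $\Delta$ is vertex decomposable by definition and $I_{\Delta^{\vee}}$ is vertex splittable by clause (i) of the definition.

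For the forward inductive step, assume $\Delta$ has a shedding vertex $x$ such that $\Delta_1 := \T{del}_{\Delta}(x)$ and $\Delta_2 := \T{lk}_{\Delta}(x)$ are vertex decomposable (simplicial complexes on $X \setminus \{x\}$). The inductive hypothesis gives that $I_{\Delta_1^{\vee}}$ and $I_{\Delta_2^{\vee}}$ are vertex splittable ideals in $k[X \setminus \{x\}]$, and Lemma \ref{v} directly yields the vertex splitting $I_{\Delta^{\vee}} = x I_{\Delta_1^{\vee}} + I_{\Delta_2^{\vee}}$ with $I_{\Delta_2^{\vee}} \subseteq I_{\Delta_1^{\vee}}$, as required.

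Conversely, write $I_{\Delta^{\vee}} = xI_1 + I_2$ for a vertex splitting. Since $I_1, I_2 \subseteq k[X \setminus \{x\}]$, this decomposition is forced to coincide with the natural $x$-partition of $I_{\Delta^{\vee}}$; using that a generator $x^{F^c}$ is divisible by $x$ precisely when $x \notin F$, one obtains $I_2 = (x^{F_i^c} : 1 \leq i \leq k)$ and $I_1 = (x^{F_j^c \setminus \{x\}} : k+1 \leq j \leq m)$. Unwinding the inclusion $I_2 \subseteq I_1$ monomial-by-monomial translates it into the statement that for each $i \leq k$ there exists $j > k$ with $F_i \setminus \{x\} \subseteq F_j$; by the remark following Definition \ref{1.1}, this is exactly the shedding condition on $x$, and it moreover forces $\Delta_1 = \langle F_{k+1}, \ldots, F_m \rangle$, whence $I_{\Delta_1^{\vee}} = I_1$ and $I_{\Delta_2^{\vee}} = I_2$. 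Applying the inductive hypothesis to these two ideals on $n-1$ variables yields vertex decomposability of $\Delta_1$ and $\Delta_2$, and hence of $\Delta$. I expect the main obstacle to lie precisely in this dictionary in the converse direction: the identification $I_1 = I_{\Delta_1^{\vee}}$ itself depends on $\Delta_1$ being spanned by the $F_j$ for $j > k$, which in turn relies on shedding; so one must first extract the shedding property from the purely algebraic inclusion $I_2 \subseteq I_1$ before the identification becomes legal.
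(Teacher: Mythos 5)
Your proof is correct and takes essentially the same route as the paper's: both implications by induction on $|X|$, with Lemma \ref{v} supplying the forward step and, in the converse, the identification $I_1=I_{{\T{del}_\Delta(x)}^{\vee}}$, $I_2=I_{{\T{lk}_\Delta(x)}^{\vee}}$ obtained only after the shedding property of $x$ is extracted from the inclusion $I_2\subseteq I_1$. The subtlety you flag at the end is exactly the point the paper's argument also turns on, so nothing further is needed.
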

\begin{proof}
Assume that $\Delta$ is a vertex decomposable simplicial complex with vertex set $X$. We use induction on $n=|X|$. If $n=1$, then $I_{\Delta^\vee}$ is clearly vertex splittable. Suppose inductively that the result has been proved for smaller values of $n$. In view of Lemma \ref{v}, we have $I_{\Delta^{\vee}}=xI_{\Delta_1^{\vee}}+I_{\Delta_2^{\vee}}$ and $I_{\Delta_2^{\vee}}\subseteq I_{\Delta_1^{\vee}}$, where $x$ is a shedding vertex of $\Delta$, $\Delta_1=\T{del}_{\Delta}(x)$
and $\Delta_2=\T{lk}_{\Delta}(x)$. Since $\Delta_1$ and $\Delta_2$ are vertex decomposable simplicial complexes on $X\setminus \{x\}$, inductive hypothesis implies that $I_{\Delta_1^{\vee}}$ and $I_{\Delta_2^{\vee}}$ are vertex splittable ideals in $k[X\setminus \{x\}]$.
 This completes the \emph{only if} part.

To prove the \emph{if} part, let $\Delta=\langle F_1, \dots, F_m\rangle$. Then $I_{\Delta^{\vee}}=(x^{X\setminus F_1},\ldots,x^{X\setminus F_m})$. If $I_{\Delta^{\vee}}=(u)$ for some monomial $u$, $I_{\Delta^{\vee}}=(0)$ or $I_{\Delta^{\vee}}=R$, then $\Delta$ is either a simplex or empty simplicial complex which is vertex decomposable. Otherwise, there is a variable $x\in X$ and vertex splittable ideals $I_1$ and $I_2$ of $k[X\setminus \{x\}]$ so that $I_{\Delta^{\vee}}=xI_1+I_2$, $I_2\subseteq I_1$. Suppose inductively that the result is true for any vertex splittable ideal in $k[X']$ with $|X'|<|X|$. We show that $x$ is a shedding vertex of $\Delta$, $I_1=I_{{\T{del}_\Delta(x)}^\vee}$ and  $I_2=I_{{\T{lk}_\Delta(x)}^\vee}$. Without loss of generality assume that $F_1,\ldots,F_k$ $(k\leq m)$ are all the facets of $\Delta$ containing $x$. Set $X'=X\setminus \{x\}$. Then
\begin{align*}
I_{\Delta^{\vee}}&=(x^{X\setminus F_1},\dots,x^{X\setminus F_k})+(x^{X\setminus F_{k+1}},\dots,x^{X\setminus F_m})\\
 &=(x^{X'\setminus (F_1\setminus \{x\})},\dots,x^{X'\setminus (F_k\setminus \{x\})})+x(x^{X'\setminus F_{k+1}},\dots,x^{X'\setminus F_m}).
\end{align*}
Hence $I_1=(x^{X'\setminus F_{k+1}},\dots,x^{X'\setminus F_m})$ and $I_2=(x^{X'\setminus (F_1\setminus \{x\})},\dots,x^{X'\setminus (F_k\setminus \{x\})})$.
Since $I_2\subseteq I_1$, one can see that for any $1\leq i\leq k$, there is an integer $k+1\leq j\leq m$ such that $F_i\setminus \{x\}\subseteq F_j$.
This ensures that $\T{del}_\Delta(x)=\langle F_{k+1}, \dots, F_m\rangle$ and $I_1=I_{{\T{del}_\Delta(x)}^\vee}$.
Also, clearly $\T{lk}_\Delta(x)=\langle F_1\setminus \{x\}, \dots, F_k\setminus \{x\}\rangle$, $I_2=I_{{\T{lk}_\Delta(x)}^\vee}$ and every facet of $\T{del}_\Delta(x)$ is a facet of $\Delta$.
By induction hypothesis $\T{del}_\Delta(x)$ and $\T{lk}_\Delta(x)$ are vertex decomposable. Hence, $\Delta$ is vertex decomposable as desired.
\end{proof}
In the following, it is shown that vertex splittable ideals have linear quotients. Note that when $I$ is a squarefree vertex splittable ideal, the following theorem implies the known fact that any vertex decomposable simplicial complex is shellable.
\begin{thm}\label{q}
Any vertex splittable ideal has linear quotients.
\end{thm}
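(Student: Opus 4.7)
I would prove this by induction on the recursive construction of $I$. If $I=(u)$ is principal, linear quotients is immediate. Otherwise, $I=xI_1+I_2$ with $I_1,I_2\subseteq k[X\setminus\{x\}]$ vertex splittable and $I_2\subseteq I_1$, and by induction there are linear quotient orderings $g_1<g_2<\cdots<g_s$ of $\mathcal{G}(I_1)$ and $h_1<h_2<\cdots<h_t$ of $\mathcal{G}(I_2)$. A preliminary observation is that $\mathcal{G}(I)=\{xg_1,\ldots,xg_s,h_1,\ldots,h_t\}$: pairwise incomparability within each family is inherited from $\mathcal{G}(I_1)$ and $\mathcal{G}(I_2)$; no $xg_i$ divides any $x$-free $h_j$; and no $h_j$ divides $g_i$, since otherwise $h_j\mid xg_i$ would make $xg_i$ non-minimal in $I$.

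My candidate ordering for $\mathcal{G}(I)$ is
\[ xg_1<xg_2<\cdots<xg_s<h_1<h_2<\cdots<h_t. \]
For $f=xg_\ell$ the colon $(xg_1,\ldots,xg_{\ell-1}):xg_\ell$ collapses to $(g_1,\ldots,g_{\ell-1}):g_\ell$ (since $\gcd(xg_i,xg_\ell)=x\gcd(g_i,g_\ell)$), which by the linear quotient property of $I_1$ is generated by variables.

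For $f=h_\ell$ I would analyze the colon
\[ J=(xg_1,\ldots,xg_s,h_1,\ldots,h_{\ell-1}):h_\ell, \]
whose generators are the $x$-multiples $xg_i/\gcd(g_i,h_\ell)$ (since $h_\ell$ is $x$-free, $\gcd(xg_i,h_\ell)=\gcd(g_i,h_\ell)$) together with $h_j/\gcd(h_j,h_\ell)$ for $j<\ell$; the latter generate a variable ideal $(z_1,\ldots,z_{m'})$ by $I_2$'s linear quotients. The main obstacle, which carries the whole argument, is to show $x\in J$: because $I_2\subseteq I_1$, the generator $h_\ell$ is divisible by some $g_p$, whence $xh_\ell=(h_\ell/g_p)\cdot xg_p\in(xg_p)\subseteq(xg_1,\ldots,xg_s)$, forcing $x\in J$. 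Once $x\in J$, each $x$-multiple $xg_i/\gcd(g_i,h_\ell)$ lies in $(x)\subseteq J$, so $J=(x,z_1,\ldots,z_{m'})$ is generated by variables, closing the induction.
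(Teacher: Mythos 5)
Your proof is correct and follows essentially the same route as the paper's: induction on the splitting, the ordering $xg_1<\cdots<xg_s<h_1<\cdots<h_t$, and the key use of $I_2\subseteq I_1$ to show that $x$ lies in the colon ideal of each $h_\ell$, which absorbs all the $x$-divisible generators $xg_i/\gcd(g_i,h_\ell)$. The only cosmetic difference is your preliminary check that this list is the minimal generating set, a point the paper leaves implicit.
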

\begin{proof}
Let $I$ be a vertex splittable ideal. If $I=(u)$ for some monomial $u$, $I=(0)$ or $I=R$, then clearly I has linear quotients.
Otherwise, there is a variable $x\in X$ and there are vertex splittable ideals $I_1$ and $I_2$ of $k[X\setminus \{x\}]$ such that $I=xI_1+I_2$ and $I_2\subseteq I_1$. By induction on $n=|X|$ we prove the assertion. For $n=1$ the result is clear.
By induction assume that for any set $X'$ with $|X'|<n$, any splittable ideal of the ring  $k[X']$ has linear quotients.
Thus $I_1$ and $I_2$ have linear quotients.
Let $f_1<\cdots<f_r$ and $g_1<\cdots<g_s$ be the order of linear quotients on the minimal generators of $I_1$
and $I_2$, respectively. We claim that the ordering $xf_1<\cdots<xf_r<g_1<\cdots<g_s$ is an order of linear quotients
on the minimal generators of $I$. For any $1\leq i \leq r$, the ideal $(xf_1,\ldots,xf_{i-1}):(xf_i)=(f_1,\ldots,f_{i-1}):(f_i)$ is generated by variables by assumption. For an integer $1\leq i\leq s$, let $(g_1,\ldots,g_{i-1}):(g_i)=(x_{i_1},\ldots,x_{i_t})$ and consider the ideal $(xf_1,\ldots,xf_r,g_1,\ldots,g_{i-1}):(g_i)$. We know that $x$ divides any generator of the colon ideal $(xf_j):(g_i)$ for any
$1\leq j\leq r$. Since $I_2\subseteq I_1$, there is an integer $1\leq \ell\leq r$ such that $g_i\in (f_\ell)$. Therefore
$(xf_\ell):(g_i)=(x)$.
It means that $(xf_1,\ldots,xf_r,g_1,\ldots,g_{i-1}):(g_i)=(x,x_{i_1},\ldots,x_{i_t})$. Thus $I$ has linear quotients.
\end{proof}

The following corollary is an immediate consequence of Lemma \ref{Faridi} and Theorem \ref{q}.
\begin{cor}\label{corlinear}
Let $I$ be a vertex splittable ideal generated by monomials in the same degrees. Then $I$ has linear resolution.
\end{cor}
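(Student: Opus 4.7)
The plan is very short: this corollary is a direct consequence of chaining Theorem \ref{q} with Lemma \ref{Faridi}. First, I invoke Theorem \ref{q} to conclude that the vertex splittable ideal $I$ has linear quotients with respect to some ordering of its minimal generators. The theorem gives this unconditionally, with no hypothesis on the degrees of generators.

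Next, since by assumption all generators of $I$ are monomials of the same degree $d$, the hypotheses of Lemma \ref{Faridi} are met. Applying that lemma then yields that $I$ has a linear resolution, i.e.\ $\beta_{i,j}(I) = 0$ for all $j \neq i+d$. So the full argument amounts to writing ``by Theorem \ref{q}, $I$ has linear quotients; since the minimal generators of $I$ share a common degree, Lemma \ref{Faridi} implies $I$ has a linear resolution.''

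There is no real obstacle here: the work was all done in Theorem \ref{q}, and the equal-degree hypothesis is only used as the bridge between having linear quotients and having a linear resolution via the cited lemma. In particular, one does not need to re-examine the recursive structure of the vertex splitting $I = xI_1 + I_2$ or track the specific ordering of generators produced in the proof of Theorem \ref{q}; the output of that theorem (linear quotients in some order) is already exactly what Lemma \ref{Faridi} consumes. Hence the corollary is immediate.
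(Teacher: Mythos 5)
Your proof is correct and matches the paper exactly: the paper states the corollary as an immediate consequence of Theorem \ref{q} (vertex splittable implies linear quotients) combined with Lemma \ref{Faridi} (linear quotients plus equal-degree generators implies linear resolution). Nothing further is needed.
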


The next theorem, which is a special case of \cite[Corollary 2.7]{Leila}, is our main tool to prove Theorem \ref{ss}. First we need to recall the following definition.

\begin{defn}\label{1.20}
{\rm
Let $I$ be a monomial ideal with linear quotients and $f_1<\dots <f_m$ be an order of linear quotients on the minimal generators of $I$. For any $1\leq i\leq m$, $\set_I(f_i)$ is defined as
$$\set_I(f_i)=\{x_k:\ x_k\in (f_1,\ldots, f_{i-1}) : (f_i)\}.$$}
\end{defn}

\begin{thm}\cite[Corollary 2.7]{Leila}\label{Leila}
Let $I$ be a monomial ideal with linear quotients with the ordering $f_1<\cdots<f_m$ on the minimal generators of $I$.
Then $$\beta_{i,j}(I)=\sum_{\deg(f_t)=j-i} {|\set_I(f_t)|\choose i}.$$
\end{thm}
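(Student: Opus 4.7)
The plan is to induct on $m$, the number of minimal generators of $I$, using a mapping cone construction at each step. The base case $m=1$ is trivial: $I=(f_1)$ is free, so $\beta_{0,\deg f_1}(I)=1$ is the only nonzero Betti number, and since $\set_I(f_1)=\emptyset$ the right-hand side equals $1$ precisely when $i=0$ and $j=\deg f_1$.

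For the inductive step set $I':=(f_1,\ldots,f_{m-1})$. Then $I'$ has linear quotients with the induced order and $\set_{I'}(f_t)=\set_I(f_t)$ for $t<m$, so the formula holds for $I'$ by induction. By the linear quotients hypothesis, $(I':f_m)=(\set_I(f_m))$, hence
$$I'\cap (f_m)=f_m\cdot(\set_I(f_m))\cong (\set_I(f_m))(-\deg f_m).$$
Writing $s:=|\set_I(f_m)|$ and noting that $(\set_I(f_m))$ is generated by variables, its minimal free resolution is the Koszul complex, so $\beta_{i,i+1}((\set_I(f_m)))={s\choose i+1}$. I would then establish the Betti splitting
$$\beta_{i,j}(I)=\beta_{i,j}(I')+\beta_{i,j}((f_m))+\beta_{i-1,j}(I'\cap(f_m))$$
by taking the short exact sequence $0\to I'\cap(f_m)\to I'\oplus(f_m)\to I\to 0$, lifting the injection to a comparison map between minimal free resolutions, and arguing that the resulting mapping cone is itself minimal. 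Minimality relies on the linear quotients structure: each generator of the Koszul complex for $(\set_I(f_m))$ is sent to a scalar multiple by some variable $x_{j_k}\in\set_I(f_m)$ of a generator of the resolution of $I'$, so every new matrix entry lies in $(x_1,\ldots,x_n)$ and no cancellation occurs.

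Granted the splitting, the contribution of the new generator $f_m$ to $\beta_{i,j}(I)$ is
$$\beta_{i,j}((f_m))+\beta_{i-1,j-\deg f_m}((\set_I(f_m)))={s\choose i}$$
whenever $\deg f_m=j-i$ (with ${s\choose 0}=1$ capturing the generator itself at $i=0$), and vanishes otherwise. Adding this to the inductive formula for $I'$ gives exactly $\sum_{\deg f_t=j-i}{|\set_I(f_t)|\choose i}$, completing the induction.

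The main obstacle I expect is verifying the minimality of the mapping cone: the general formalism is classical, but carefully tracking the lifted comparison map and checking that every entry lies in the maximal ideal — precisely where the linear quotients assumption is used — is the technical crux. Once that is in hand, the Betti splitting, and hence the closed formula, follow by straightforward bookkeeping combined with the explicit Koszul Betti numbers of $(\set_I(f_m))$.
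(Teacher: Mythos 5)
First, a remark on the comparison you were asked for: the paper does not prove this statement at all --- it is imported verbatim from \cite[Corollary 2.7]{Leila} --- so there is no internal proof to measure your argument against. On its own terms, your proposal follows the standard iterated mapping cone strategy (in the spirit of Herzog--Takayama and of \cite{Leila} itself), and the bookkeeping is correct: $I'\cap(f_m)=f_m\cdot(\set_I(f_m))$, the shifted Koszul resolution gives $\beta_{i,i+1}((\set_I(f_m)))=\binom{s}{i+1}$, and the term $\binom{s}{0}=1$ correctly absorbs the new generator at homological degree $0$, so that granting the Betti splitting the induction closes.

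The genuine gap is exactly where you place it, and your sketch does not close it. What you describe --- ``each generator of the Koszul complex is sent to a scalar multiple by some variable of a generator of the resolution of $I'$'' --- is only the homological-degree-zero component of the comparison map; even there, the component landing in the resolution of $I'$ is multiplication by the monomial $x_{j_k}f_m/f_t$ (where $f_t\mid x_{j_k}f_m$, $t<m$), which lies in $(x_1,\dots,x_n)$ only because the generating set is minimal, a point worth making explicit. In positive homological degrees the comparison maps are obtained by lifting, and their entries are not forced into the maximal ideal by degree considerations alone: the $i$-th term of the resolution of $I'\cap(f_m)$ sits in degree $\deg f_m+1+i$, while $\Tor_i(I',k)$ can be nonzero in degree $\deg f_t+i$ for an earlier generator with $\deg f_t=\deg f_m+1$, a configuration the hypotheses do not exclude since the linear-quotients order need not refine the degree order. (If all $f_t$ have the same degree, or the order is by increasing degree, the degree count does kill these components and your argument is complete; the mixed-degree case is precisely the delicate point that \cite{Leila} is devoted to, so ``the general formalism is classical'' understates what remains.) The cleanest repair is to prove the Betti splitting directly via the criterion of \cite{FHV}: one must show the maps $\Tor_i(I'\cap(f_m),k)\to\Tor_i(I',k)\oplus\Tor_i((f_m),k)$ vanish for all $i$, and the essential input is that $I'\cap(f_m)$ is generated in the single degree $\deg f_m+1$ and has a linear resolution (this is, in substance, \cite[Theorem 2.3]{FHV}). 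With that vanishing in hand, your computation of the contribution of $f_m$ and the induction are correct.
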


Now, we are ready to bring one of the main results of this paper.
\begin{thm}\label{ss}
Let $I=xI_1+I_2$ be a vertex splitting for the monomial ideal $I$. Then $I=xI_1+I_2$ is a Betti splitting.

\end{thm}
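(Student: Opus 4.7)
The plan is to verify the Betti splitting formula
$$\beta_{i,j}(I) = \beta_{i,j}(xI_1) + \beta_{i,j}(I_2) + \beta_{i-1,j}(xI_1 \cap I_2)$$
term-by-term, using the explicit Betti number formula of Theorem \ref{Leila}. All four ideals involved will turn out to have linear quotients in compatible orderings, and the identity will reduce to a single instance of Pascal's identity $\binom{n+1}{i} = \binom{n}{i} + \binom{n}{i-1}$ applied to each generator of $I_2$.

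First I would identify $xI_1 \cap I_2$. Since $I_2 \subseteq I_1$ we have $xI_2 \subseteq xI_1 \cap I_2$. Conversely, any monomial $u \in xI_1 \cap I_2$ is divisible by $x$, say $u = xv$; because $I_2$ is generated by monomials $g_1,\dots,g_s$ in $k[X \setminus \{x\}]$, some $g_t \mid u$, and since $g_t$ does not involve $x$ we get $g_t \mid v$. Hence $v \in I_2$ and $u \in xI_2$, so $xI_1 \cap I_2 = xI_2$.

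Next I would record the relevant $\set$ data from the proof of Theorem \ref{q}. With $f_1 < \cdots < f_r$ a linear quotients order on $\mathcal{G}(I_1)$ and $g_1 < \cdots < g_s$ one on $\mathcal{G}(I_2)$, that proof gives the linear quotients order $xf_1 < \cdots < xf_r < g_1 < \cdots < g_s$ on $\mathcal{G}(I)$ with $\set_I(xf_t) = \set_{I_1}(f_t)$ and $\set_I(g_t) = \{x\} \cup \set_{I_2}(g_t)$, the union being disjoint because no $g_t$ involves $x$. The same cancellation of $x$ shows that $xI_1$ and $xI_2$ have linear quotients in the obvious orderings with $\set$ sets equal to $\set_{I_1}(f_t)$ and $\set_{I_2}(g_t)$ respectively.

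Applying Theorem \ref{Leila} to $I$ and to $xI_1$, the contributions of the $xf_t$'s match exactly, so
$$\beta_{i,j}(I) - \beta_{i,j}(xI_1) = \sum_{\deg(g_t) = j-i} \binom{|\set_{I_2}(g_t)| + 1}{i}.$$
Applying Theorem \ref{Leila} to $I_2$ and to $xI_2 = xI_1 \cap I_2$, and noting that the degree shift $\deg(xg_t) = \deg(g_t) + 1$ precisely cancels the homological shift from $\beta_{i,j}$ to $\beta_{i-1,j}$, yields
$$\beta_{i,j}(I_2) + \beta_{i-1,j}(xI_1 \cap I_2) = \sum_{\deg(g_t) = j-i} \left[\binom{|\set_{I_2}(g_t)|}{i} + \binom{|\set_{I_2}(g_t)|}{i-1}\right].$$
Pascal's identity equates the two sides and finishes the proof. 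The main delicate step is the bookkeeping of the degree/homological shifts in the $xI_2$ sum so that its generators index the same set of $g_t$'s as those appearing in the $I$ and $I_2$ sums; once this is lined up, the combinatorial identity is immediate.
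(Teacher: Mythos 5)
Your proposal is correct and follows essentially the same route as the paper: both identify $xI_1\cap I_2=xI_2$, use the linear quotients ordering $xf_1<\cdots<xf_r<g_1<\cdots<g_s$ with its $\set$ data from the proof of Theorem \ref{q}, apply the formula of Theorem \ref{Leila}, and conclude by Pascal's identity. Your version is slightly more explicit about the verification of $xI_1\cap I_2=xI_2$ and the degree/homological shift bookkeeping, but the argument is the same.
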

\begin{proof}
By Theorem \ref{q}, $I$, $I_1$ and $I_2$ have linear quotients. Let $f_1<\cdots<f_r$ and $g_1<\cdots<g_s$ be the order of linear quotients on the minimal generators of $I_1$
and $I_2$, respectively. As it was shown in the proof of Theorem \ref{q} the ordering $xf_1<\cdots<xf_r<g_1<\cdots<g_s$ is an order of linear quotients
on the minimal generators of $I$, $\set_I(xf_t)=\set_{I_1}(f_t)$ for any $1\leq t\leq r$ and $\set_I(g_k)=\{x\}\cup \set_{I_{2}}(g_k)$ for any $1\leq k\leq s$. By Theorem \ref{Leila},
$$\beta_{i,j}(I)=\sum_{\deg(f_t)=j-i-1}{|\set_I(xf_t)|\choose i}+
\sum_{\deg(g_k)=j-i}{|\set_I(g_k)|\choose i}.$$
Thus
$$\beta_{i,j}(I)=\sum_{\deg(f_t)=j-i-1}{|\set_{I_{1}}(f_t)|\choose i}+
\sum_{\deg(g_k)=j-i}{|\set_{I_{2}}(g_k)|+1\choose i}.$$ Applying the equality
$${|\set_{I_{2}}(g_k)|+1\choose i}={|\set_{I_{2}}(g_k)|\choose i}+{|\set_{I_{2}}(g_k)|\choose i-1},$$
we have
$$\sum_{\deg(g_k)=j-i}{|\set_{I_{2}}(g_k)|+1\choose i}=\sum_{\deg(g_k)=j-i}{|\set_{I_{2}}(g_k)|\choose i}+\sum_{\deg(g_k)=j-i}{|\set_{I_{2}}(g_k)|\choose i-1}$$

$$=\beta_{i,j}(I_{2})+\beta_{i-1,j-1}(I_{2}).$$
Also $$\sum_{\deg(f_t)=j-i-1}{|\set_{I_{1}}(f_t)|\choose i}=\beta_{i,j-1}(I_{1}).$$
Therefore $$\beta_{i,j}(I)=\beta_{i,j-1}(I_{1})+\beta_{i,j}(I_{2})+\beta_{i-1,j-1}(I_{2}).$$
Moreover $I_2\subseteq I_1$ implies that $xI_{1}\cap I_{2}=xI_{2}$.
Using this equality and the equalities $\beta_{i,j-1}(I_{1})=\beta_{i,j}(xI_{1})$ and
$\beta_{i-1,j-1}(I_{2})=\beta_{i-1,j}(xI_{2})$, yields to
$$\beta_{i,j}(I)=\beta_{i,j}(xI_{1})+\beta_{i,j}(I_{2})+\beta_{i-1,j}(xI_{1}\cap I_{2}),$$
which completes the proof.
\end{proof}

\begin{rem}\label{fhvtuyl}
{\rm
Francisco, Ha and Van Tuyl in \cite{FHV} defined the concept of $x$-partition for a monomial ideal, when $x\in X$. For a monomial ideal $I$, if $J$ is the ideal generated by all elements of $\mathcal{G}(I)$ divisible by $x$, and $K$ is the ideal generated by all other elements of $\mathcal{G}(I)$, then $I=J+K$ is called an \textbf{$x$-partition}. If $I=J+K$ is also a Betti splitting, they call $I=J+K$ an \textbf{$x$-splitting}. In this regard, every vertex splitting is an $x$-splitting for some variable $x\in X$, considering Theorem \ref{ss}.
}
\end{rem}

\begin{rem}\label{bettis}
{\rm From the proof of Theorem \ref{ss}, one can see that for a vertex  splittable ideal $I$ with vertex splitting $I=xI_1+I_2$, the graded Betti numbers of $I$
can be computed by the following recursive formula
$$\beta_{i,j}(I)=\beta_{i,j-1}(I_{1})+\beta_{i,j}(I_{2})+\beta_{i-1,j-1}(I_{2}).$$}
\end{rem}

In the following corollary, the recursive formula is written for the graded Betti numbers of $I_{\Delta^{\vee}}$, when $\Delta$ is a vertex decomposable simplicial complex and consequently some inductive formulas for the regularity and projective dimension of the ring $R/I_{\Delta}$ are presented. The inductive formula given below for $\T{reg}(R/I_{\Delta})$ was also proved in \cite{HTW} by a different approach.

\begin{cor}\label{corvd}
Let $\Delta$ be a vertex decomposable simplicial complex, $x$ a shedding vertex of $\Delta$, $\Delta_1=\T{del}_{\Delta}(x)$
and $\Delta_2=\T{lk}_{\Delta}(x)$. Then
\begin{itemize}
  \item [(i)] $\beta_{i,j}(I_{\Delta^{\vee}})=\beta_{i,j-1}(I_{\Delta_1^{\vee}})+\beta_{i,j}(I_{\Delta_2^{\vee}})+\beta_{i-1,j-1}(I_{\Delta_2^{\vee}})$,
  \item [(ii)] $\T{pd}(R/I_{\Delta})=\max\{\T{pd}(R/I_{\Delta_1})+1,\T{pd}(R/I_{\Delta_2})\}$,
  \item [(iii)](Compare \cite[Theorem 4.2]{HTW}.) $\T{reg}(R/I_{\Delta})=\max\{\T{reg}(R/I_{\Delta_1}),\T{reg}(R/I_{\Delta_2})+1\}$.
\end{itemize}
\end{cor}
\begin{proof}
(i)  follows from Theorems \ref{vI} and \ref{ss}. (ii) and (iii) follow from (i), the equalities $\T{pd}(I_{\Delta^{\vee}})=\T{reg}(R/I_{\Delta})$ and $\T{reg}(I_{\Delta^{\vee}})=\T{pd}(R/I_{\Delta})$ in conjunction with
Theorem \ref{s}.
\end{proof}

\section{Applications to vertex cover ideal of a vertex decomposable graph}

This section is devoted to some applications of the recursive formulas presented in previous section to some special classes of graphs.
For a simple graph $G$ by $V(G)$ and $E(G)$ we mean the vertex set and the edge set of $G$, respectively. For a vertex $v\in V(G)$, set $N_G(v)=\{u\in V(G):\ \{u,v\}\in E(G)\}$ and $N_G[v]=N_G(v)\cup \{v\}$. Moreover, the cardinality of $N_G(v)$ is called the degree of $v$ in $G$ and is denoted by $\deg_G(v)$.

The following is one of our main results which is a consequence of Corollary \ref{corvd}.

\begin{thm}\label{graphvd}
Let $G$ be a vertex decomposable graph, $v\in V(G)$ be a shedding vertex of $G$, $G'=G\setminus \{v\}$,
$G''=G\setminus N_G[v]$ and $\deg_G(v)=t$. Then
$$\beta_{i,j}(I(G)^{\vee})=\beta_{i,j-1}(I(G')^{\vee})+\beta_{i,j-t}(I(G'')^{\vee})+\beta_{i-1,j-t-1}(I(G'')^{\vee}).$$
\end{thm}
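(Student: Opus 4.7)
The plan is to recognise this as an immediate consequence of Corollary \ref{corvd} applied to the independence complex of $G$, modulo carefully translating the two Alexander--dual ideals appearing on the right-hand side into the vertex cover ideals $I(G')^\vee$ and $I(G'')^\vee$. The only non-trivial book-keeping comes from a degree shift of $t$ contributed by the ``missing'' neighbours of $v$ in the link.

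First I would set $\Delta = \T{Ind}(G)$, the independence complex of $G$. Since $G$ is vertex decomposable with shedding vertex $v$, $\Delta$ is a vertex decomposable simplicial complex with $v$ as a shedding vertex, and
$$\T{del}_{\Delta}(v) = \T{Ind}(G'), \qquad \T{lk}_{\Delta}(v) = \T{Ind}(G'').$$
Also $I_\Delta = I(G)$, and hence $I_{\Delta^\vee} = I(G)^\vee$. Applying Corollary \ref{corvd} with $\Delta_1 = \T{del}_\Delta(v)$ and $\Delta_2 = \T{lk}_\Delta(v)$ yields
$$\beta_{i,j}(I(G)^\vee) = \beta_{i, j-1}(I_{\Delta_1^\vee}) + \beta_{i,j}(I_{\Delta_2^\vee}) + \beta_{i-1, j-1}(I_{\Delta_2^\vee}),$$
where, as in Lemma \ref{v}, both ideals on the right are formed relative to the ambient vertex set $V(G') = V(G) \setminus \{v\}$.

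Next I would identify the two right-hand Alexander duals with the vertex cover ideals of $G'$ and $G''$. For $\Delta_1$ the identification is immediate: its facets are exactly the maximal independent sets of $G'$, so $I_{\Delta_1^\vee} = I(G')^\vee$ in $k[V(G')]$, giving $\beta_{i, j-1}(I_{\Delta_1^\vee}) = \beta_{i, j-1}(I(G')^\vee)$. For $\Delta_2$ the facets are $F \setminus \{v\}$ with $F$ a facet of $\Delta$ containing $v$; each such face lies in $V(G'') = V(G) \setminus N_G[v]$ and is a maximal independent set of $G''$. Since $V(G') \setminus V(G'') = N_G(v)$, the generator of $I_{\Delta_2^\vee}$ corresponding to $F \setminus \{v\}$ factors as
$$x^{V(G') \setminus (F \setminus \{v\})} = x^{N_G(v)} \cdot x^{V(G'') \setminus (F \setminus \{v\})}.$$
Hence $I_{\Delta_2^\vee} = x^{N_G(v)} \cdot I(G'')^\vee$ (with $I(G'')^\vee$ extended from $k[V(G'')]$ to $k[V(G')]$), and multiplication by a monomial of degree $t = \deg_G(v)$ gives a graded isomorphism $I_{\Delta_2^\vee} \cong I(G'')^\vee(-t)$. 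Consequently,
$$\beta_{i,j}(I_{\Delta_2^\vee}) = \beta_{i, j-t}(I(G'')^\vee) \qquad \text{and} \qquad \beta_{i-1, j-1}(I_{\Delta_2^\vee}) = \beta_{i-1, j-t-1}(I(G'')^\vee).$$
Substituting these into the recursion above yields exactly the claimed formula.

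The main obstacle I foresee is spotting this degree shift by $t$. Lemma \ref{v} treats $\Delta_2$ on the ambient set $V(G')$, but every face of $\Delta_2$ avoids the whole of $N_G(v)$, so these $t$ neighbours appear as a common monomial factor $x^{N_G(v)}$ in every generator of $I_{\Delta_2^\vee}$. Once this shift is accounted for, the Betti numbers of $I(G'')^\vee$ drop out and the recursion of Corollary \ref{corvd} transforms cleanly into the desired one.
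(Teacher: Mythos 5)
Your proposal is correct and follows exactly the paper's own route: pass to the independence complex, apply Corollary \ref{corvd}, and identify $I_{\Delta_1^{\vee}}=I(G')^{\vee}$ and $I_{\Delta_2^{\vee}}=x^{N_G(v)}\cdot I(G'')^{\vee}$, with the common factor $x^{N_G(v)}$ producing the degree shift by $t$. The only difference is that you spell out the verification of these identifications, which the paper simply asserts.
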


\begin{proof}
Let $\Delta=\Delta_G$, $\Delta_1=\T{del}_{\Delta}(v)$, $\Delta_2=\T{lk}_{\Delta}(v)$ and $N_G(v)=\{x_1,\ldots,x_t\}$. Then $I_{\Delta_G^{\vee}}=(I_{\Delta_G})^{\vee}=I(G)^{\vee}$. Moreover, $\Delta_{G'}=\{F\in \Delta:\ F\subseteq V(G')\}=\{F\in \Delta:\ v\notin F\}=\Delta_1$. Thus $I_{\Delta_1^{\vee}}=I_{\Delta_{G'}^{\vee}}=(I_{\Delta_{G'}})^{\vee}=I(G')^{\vee}$.

Also $I_{\Delta_2^{\vee}}=(x^{(V(G)\setminus \{v\})\setminus F}:\ F \in \mathcal{F}(\Delta_2))$, since $\Delta_2$ is a simplicial complex on the vertex set $V(G)\setminus \{v\}$. Moreover, $F\in \mathcal{F}(\Delta_2)$ if and only if $v,x_1,\ldots,x_t\notin F$ and $F\in \mathcal{F}(\Delta_{G''})$. This also implies that for any $F\in \mathcal{F}(\Delta_2)$, $F\subseteq V(G'')$. Thus $(V(G)\setminus \{v\})\setminus F=\{x_1,\ldots,x_t\}\cup (V(G'')\setminus F)$.
Therefore $$I_{\Delta_2^{\vee}}=x_1\cdots x_t\ (x^{V(G'')\setminus F}:\ F\in \mathcal{F}(\Delta_{G''})).$$
Moreover, $(x^{V(G'')\setminus F}:\ F\in \mathcal{F}(\Delta_{G''}))=(I_{\Delta_{G''}})^{\vee}=I(G'')^{\vee}$.
Thus $I_{\Delta_2^{\vee}}=x_1\cdots x_t I(G'')^{\vee}$.
Using the equality $\beta_{i,j}(x_1\cdots x_tI(G'')^{\vee})=\beta_{i,j-t}(I(G'')^{\vee})$,
the result is now clear from Corollary \ref{corvd}.
\end{proof}

By exploiting the following lemma, we state the recursive formula for the graded Betti numbers of the vertex cover ideal
of a sequentially Cohen-Macaulay bipartite graph, which generalizes \cite[Theorem 3.8]{FHV} (since any Cohen-Macaulay bipartite graph is sequentially Cohen-Macaulay), and vertex cover ideal of
chordal graphs in Corollaries \ref{seq} and \ref{chor}.

\begin{lem}\cite[Lemma 6]{W}\label{lshedding}
Let $G$ be a graph and $x,y\in V(G)$. If $N_G[x]\subseteq N_G[y]$, then $y$ is a shedding vertex for $G$.
\end{lem}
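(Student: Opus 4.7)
The plan is to translate the shedding condition for $y$ in the independence complex $\Delta_G$ into purely graph-theoretic language, and then verify it by exhibiting $x$ itself as the extension vertex.

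First I would set up the translation. Recall that faces of $\Delta_G$ are independent sets of $G$. Writing $G'=G\setminus\{y\}$ and $G''=G\setminus N_G[y]$, one checks directly that $\T{del}_{\Delta_G}(y)$ is (up to relabeling of the vertex set) the independence complex of $G'$, while $\T{lk}_{\Delta_G}(y)$ is the independence complex of $G''$. Using the Remark following Definition~\ref{1.1}, saying that $y$ is a shedding vertex of $\Delta_G$ is therefore equivalent to saying that no maximal independent set of $G''$ is a maximal independent set of $G'$; equivalently, for every maximal independent set $F$ of $G''$ there exists $v\in V(G')\setminus F$ such that $F\cup\{v\}$ is independent in $G$. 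Since $F$ is already maximal in $G''$, such a $v$ must lie in $N_G[y]\setminus\{y\}=N_G(y)$.

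Next I would use the hypothesis $N_G[x]\subseteq N_G[y]$ to produce such a $v$. The case $x=y$ is not interesting (we would just pick any $x'\neq y$ with $N_G[x']\subseteq N_G[y]$, or note the lemma is vacuous there); so assume $x\neq y$. From $x\in N_G[x]\subseteq N_G[y]$ and $x\neq y$ we get $x\in N_G(y)$, so $x$ is adjacent to $y$. I claim $v:=x$ works for every maximal independent set $F$ of $G''$. Indeed, $x\in N_G[y]$ gives $x\notin V(G'')\supseteq F$, so $x\notin F$. Moreover, if some $w\in F$ were adjacent to $x$ (or equal to $x$), then $w\in N_G[x]\subseteq N_G[y]$, contradicting $F\cap N_G[y]=\emptyset$. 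Hence $F\cup\{x\}$ is independent in $G$ and, since $x\neq y$, it is independent in $G'$, showing that $F$ is not a facet of $\T{del}_{\Delta_G}(y)$.

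Combining these two observations yields that $y$ is a shedding vertex of $\Delta_G$, which by definition is the meaning of $y$ being a shedding vertex of $G$. The only real obstacle is the bookkeeping in step one, namely making sure that the abstract shedding condition on $\Delta_G$ really does translate into the extendability condition on maximal independent sets of $G''$ inside $G'$; once that dictionary is in place, the use of the hypothesis $N_G[x]\subseteq N_G[y]$ to produce $x$ as the extending vertex is essentially one line.
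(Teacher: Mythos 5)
Your proof is correct. Note that the paper does not actually prove this lemma; it simply cites \cite[Lemma 6]{W}, so there is no internal argument to compare against --- but your argument is precisely the standard one (and matches Woodroofe's): identify $\T{del}_{\Delta_G}(y)$ and $\T{lk}_{\Delta_G}(y)$ with the independence complexes of $G\setminus\{y\}$ and $G\setminus N_G[y]$, and then use the Remark after Definition~\ref{1.1} to reduce the shedding condition to showing that every facet of the link extends by the single vertex $x$ to a strictly larger face of the deletion, which $N_G[x]\subseteq N_G[y]$ guarantees. One small quibble: when $x=y$ the statement is not vacuous but actually false in general (an isolated vertex $y$ satisfies $N_G[y]\subseteq N_G[y]$ yet need not be a shedding vertex), so the hypothesis $x\neq y$ is genuinely implicit in the lemma as used; you do assume $x\neq y$ where it matters, so this affects only your parenthetical remark, not the proof.
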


A graph $G$ with vertex set $V(G)$ is called \textbf{bipartite} if $V(G)$ can be partitioned into two sets $X$ and $Y$ such that any edge of $G$ is of the form $\{x,y\}$ for some $x\in X$ and $y\in Y$.

Van Tuyl and Villarreal in \cite{VVi} gave a recursive characterization for a sequentially Cohen-Macaulay bipartite graph as follows.

\begin{thm}\cite[Corollary 3.11]{VVi}
Let $G$ be a bipartite graph. Then $G$ is sequentially Cohen-Macaulay if and only if there are
adjacent vertices $x$ and $y$ with $\deg_G(x)=1$ such that the
bipartite graphs $G'=G\setminus N_G[x]$ and $G''=G\setminus N_G[y]$ are sequentially Cohen-Macaulay.
\end{thm}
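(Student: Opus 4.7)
The plan is to pass through vertex decomposability. For bipartite graphs, the three properties sequentially Cohen--Macaulay, shellable, and vertex decomposable coincide (a theorem of Van Tuyl in the bipartite setting). Hence it suffices to prove the analogous recursive characterization for $\Delta_G$ being vertex decomposable and then translate back via this equivalence.

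For the ``if'' direction, suppose $x$ and $y$ are adjacent with $\deg_G(x)=1$, and that both $G'=G\setminus N_G[x]$ and $G''=G\setminus N_G[y]$ are sequentially Cohen--Macaulay bipartite, hence vertex decomposable. Since $N_G[x]=\{x,y\}\subseteq N_G[y]$, Lemma \ref{lshedding} makes $y$ a shedding vertex of $\Delta_G$. Its link $\T{lk}_{\Delta_G}(y)=\Delta_{G''}$ is vertex decomposable by hypothesis. Its deletion $\T{del}_{\Delta_G}(y)=\Delta_{G\setminus y}$ is a cone over $\Delta_{G'}$ with apex the now-isolated vertex $x$, and a cone is vertex decomposable iff its base is, so $\T{del}_{\Delta_G}(y)$ is also vertex decomposable. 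By Definition \ref{1.1} applied with shedding vertex $y$, $\Delta_G$ is vertex decomposable and $G$ is sequentially Cohen--Macaulay.

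For the ``only if'' direction, the delicate step is to produce a leaf. Given a sequentially Cohen--Macaulay bipartite graph $G$ (with no isolated vertices, which is harmless), one must show that $G$ has a vertex of degree $1$. This is the bipartite analogue of Villarreal's theorem for Cohen--Macaulay bipartite graphs; the standard route uses the bipartition $V(G)=A\cup B$ together with the pure filtration supplied by the sequentially Cohen--Macaulay condition, plus a K\"onig-type counting argument on a minimal counterexample, to force a vertex in the smaller side of the bipartition to have a unique neighbor. Once such a leaf $x$ with neighbor $y$ is secured, $y$ is a shedding vertex by Lemma \ref{lshedding}, and vertex decomposability of $\Delta_G$ transfers to $\T{lk}_{\Delta_G}(y)=\Delta_{G''}$ and to $\T{del}_{\Delta_G}(y)=\Delta_{G\setminus y}$, the latter being a cone over $\Delta_{G'}$, from which vertex decomposability of $\Delta_{G'}$ is inherited. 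This locating of a leaf is the main obstacle: the cone/shedding machinery is essentially formal, but extracting a degree-one vertex from the sequentially Cohen--Macaulay plus bipartite hypotheses is where the real combinatorial content resides.
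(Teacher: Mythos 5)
First, note that the paper offers no proof of this statement at all: it is quoted verbatim from Van Tuyl--Villarreal \cite[Corollary 3.11]{VVi}, so there is no internal argument to compare against; your attempt has to stand on its own. Your \emph{if} direction does stand: $N_G[x]=\{x,y\}\subseteq N_G[y]$ makes $y$ a shedding vertex of the independence complex $\Delta_G$ by Lemma \ref{lshedding}, $\T{lk}_{\Delta_G}(y)=\Delta_{G''}$ is vertex decomposable by hypothesis and \cite[Theorem 2.10]{VT}, and $\T{del}_{\Delta_G}(y)=\Delta_{G\setminus y}$ is the cone with apex $x$ over $\Delta_{G'}$, hence vertex decomposable; so $\Delta_G$ is vertex decomposable and therefore sequentially Cohen--Macaulay. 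That half is complete.

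The \emph{only if} direction, however, has two genuine gaps. First, the existence of a degree-one vertex in a sequentially Cohen--Macaulay bipartite graph is not a routine ``K\"onig-type counting argument'' that can be waved at: it is the entire content of the theorem (it is the sequentially Cohen--Macaulay analogue of Herzog--Hibi's structure theorem for Cohen--Macaulay bipartite graphs), and you have not proved it. Second, even granting a leaf $x$ with neighbor $y$, your claim that vertex decomposability of $\Delta_G$ ``transfers'' to $\T{lk}_{\Delta_G}(y)$ and $\T{del}_{\Delta_G}(y)$ is not licensed by Definition \ref{1.1}: vertex decomposability only asserts the \emph{existence} of some shedding vertex whose link and deletion are vertex decomposable, so knowing that $y$ happens to be a shedding vertex gives you nothing about its own link and deletion. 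A correct route for this step avoids vertex decomposability altogether: links of sequentially Cohen--Macaulay complexes are sequentially Cohen--Macaulay, and $\Delta_{G'}=\T{lk}_{\Delta_G}(x)$ while $\Delta_{G''}=\T{lk}_{\Delta_G}(y)$, so both $G'$ and $G''$ inherit the property directly. With that repair the second gap closes, but the first --- producing the leaf --- remains open and is where the real work lies.
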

Using this fact we can explain the formula for Betti numbers as follows.
\begin{cor}\label{seq}
Let $G$ be a sequentially Cohen-Macaulay bipartite graph, $x,y\in V(G)$ be adjacent vertices with $\deg_G(x)=1$
such that $G'=G\setminus N_G[x]$ and $G''=G\setminus N_G[y]$ are sequentially Cohen-Macaulay and $\deg_G(y)=t$. Then
$$\beta_{i,j}(I(G)^{\vee})=\beta_{i,j-1}(I(G')^{\vee})+\beta_{i,j-t}(I(G'')^{\vee})+\beta_{i-1,j-t-1}(I(G'')^{\vee}).$$
\end{cor}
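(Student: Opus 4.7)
The plan is to apply Theorem \ref{graphvd} with the shedding vertex taken to be $y$, and then reconcile the resulting first summand with $G'=G\setminus N_G[x]$. Three points need verification: (a) $G$ is vertex decomposable, so that Theorem \ref{graphvd} applies, (b) $y$ is a shedding vertex of $\Delta_G$, and (c) $\beta_{i,j}(I(G\setminus\{y\})^\vee)=\beta_{i,j}(I(G')^\vee)$.

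For (a), I would invoke the well-known equivalence, for bipartite graphs, between sequential Cohen--Macaulayness and vertex decomposability; this follows by combining the Van Tuyl--Villarreal recursive characterization (the theorem stated just before the corollary) with an induction on $|V(G)|$, because the data $x,y,G',G''$ appearing there are precisely the data for a vertex-decomposition with shedding vertex $y$. For (b), the hypotheses $\deg_G(x)=1$ and $xy\in E(G)$ give $N_G[x]=\{x,y\}\subseteq N_G[y]$, so by Lemma \ref{lshedding}, $y$ is a shedding vertex of $\Delta_G$. Applying Theorem \ref{graphvd} with $v=y$ and $t=\deg_G(y)$ then yields
$$\beta_{i,j}(I(G)^{\vee})=\beta_{i,j-1}(I(G\setminus\{y\})^{\vee})+\beta_{i,j-t}(I(G'')^{\vee})+\beta_{i-1,j-t-1}(I(G'')^{\vee}).$$

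For (c), in $G\setminus\{y\}$ the vertex $x$ becomes isolated, since its unique neighbor $y$ has been deleted. An isolated vertex lies in no edge, so no minimal vertex cover of $G\setminus\{y\}$ contains $x$, and the minimal vertex covers of $G\setminus\{y\}$ coincide with those of $G\setminus\{x,y\}=G'$. Consequently the ideals $I(G\setminus\{y\})^{\vee}$ and $I(G')^{\vee}$ have identical minimal monomial generators, and because adjoining an unused variable to the ambient polynomial ring preserves all graded Betti numbers, $\beta_{i,j}(I(G\setminus\{y\})^{\vee})=\beta_{i,j}(I(G')^{\vee})$. Substituting this equality into the displayed formula gives exactly the stated identity. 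The main subtlety is this isolated-vertex bookkeeping in (c); beyond it, the corollary is a direct specialization of Theorem \ref{graphvd}.
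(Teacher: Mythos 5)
Your proposal is correct and follows essentially the same route as the paper: vertex decomposability of sequentially Cohen--Macaulay bipartite graphs, Lemma \ref{lshedding} applied via $N_G[x]\subseteq N_G[y]$ to get that $y$ is a shedding vertex, Theorem \ref{graphvd} with $v=y$, and the identification $I(G\setminus\{y\})^{\vee}=I(G')^{\vee}$. The paper simply cites \cite[Theorem 2.10]{VT} for vertex decomposability and states the last identification as a fact, whereas you spell out the isolated-vertex bookkeeping; the content is the same.
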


\begin{proof}
From \cite[Theorem 2.10]{VT}, we know that any sequentially Cohen-Macaulay bipartite graph $G$ is vertex decomposable.
Since $N_G[x]\subseteq N_G[y]$,
by Lemma \ref{lshedding}, $y$ is a shedding vertex for $G$. Now the result is clear by Theorem \ref{graphvd} and the fact that $I(G')^{\vee}=I(G\setminus \{y\})^{\vee}$.
\end{proof}

In a graph $G$, a vertex $x$ is called a \textbf{simplicial vertex} if the induced subgraph on the set $N_G[x]$ is a complete graph.
A graph $G$ is called \textbf{chordal}, if it contains no induced cycle of length $4$ or greater.

Dirac in \cite{Dirac} proved that any chordal graph has a simplicial vertex.  We use this fact in the following corollary.

\begin{cor}\label{chor}
Let $G$ be a chordal graph with simplicial vertex $x$ and $y\in N_G(x)$ with $\deg_G(y)=t$. Let $G'=G\setminus \{y\}$ and $G''=G\setminus N_G[y]$. Then
$$\beta_{i,j}(I(G)^{\vee})=\beta_{i,j-1}(I(G')^{\vee})+\beta_{i,j-t}(I(G'')^{\vee})+\beta_{i-1,j-t-1}(I(G'')^{\vee}).$$
\end{cor}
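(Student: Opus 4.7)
The plan is to mirror the structure of the preceding Corollary \ref{seq}: reduce the statement to an application of Theorem \ref{graphvd}, by verifying that its two hypotheses (vertex decomposability of $G$, and sheddingness of $y$) are both satisfied in the chordal setting.

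First, I would invoke the known fact that every chordal graph is vertex decomposable; this is precisely Woodroofe's theorem from \cite{W}, the same reference already used in Lemma \ref{lshedding}. So the hypothesis of Theorem \ref{graphvd} that $G$ be vertex decomposable is automatic.

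Next, I need to verify that $y$ is a shedding vertex of $G$. Here is where simpliciality of $x$ enters. Since $x$ is a simplicial vertex, the neighborhood $N_G(x)$ induces a complete subgraph. Therefore, for the chosen $y \in N_G(x)$, every other $z \in N_G(x)$ satisfies $z \in N_G(y)$, and of course $x \in N_G(y)$; together this yields $N_G[x] \subseteq N_G[y]$. By Lemma \ref{lshedding}, $y$ is a shedding vertex of $G$. This is the only genuinely new step in the argument, and the main (though mild) obstacle is simply identifying simpliciality as exactly what is needed for the neighborhood containment.

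Having verified both hypotheses, I would apply Theorem \ref{graphvd} with the shedding vertex $y$, noting that $\deg_G(y) = t$, $G' = G \setminus \{y\} = \T{del}_{\Delta_G}(y)$-side graph, and $G'' = G \setminus N_G[y]$ is the graph associated with $\T{lk}_{\Delta_G}(y)$. The formula
\[
\beta_{i,j}(I(G)^{\vee}) = \beta_{i,j-1}(I(G')^{\vee}) + \beta_{i,j-t}(I(G'')^{\vee}) + \beta_{i-1,j-t-1}(I(G'')^{\vee})
\]
then follows immediately, completing the proof.
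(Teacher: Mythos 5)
Your proposal is correct and matches the paper's own proof essentially step for step: vertex decomposability of chordal graphs via Woodroofe, the containment $N_G[x]\subseteq N_G[y]$ from simpliciality of $x$ combined with Lemma \ref{lshedding} to get that $y$ is a shedding vertex, and then a direct application of Theorem \ref{graphvd}. No gaps; your expansion of why simpliciality gives the neighborhood containment is just a slightly more detailed version of the same argument.
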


\begin{proof}
By \cite[Corollary 7]{W} any chordal graph is vertex decomposable. Since $x$ is a simplicial vertex, for any $y\in N_G(x)$, we have $N_G[x]\subseteq N_G[y]$. Thus by Lemma \ref{lshedding},
 $y$ is a shedding vertex for $G$. Now apply Theorem \ref{graphvd}.
\end{proof}

The following theorem investigates another property of chordal graphs.
\begin{thm}\label{propchor}
Let $G$ be a chordal graph. Then $I(G^c)$ is a vertex splittable ideal.
\end{thm}
\begin{proof}
We prove the result by the induction on $|V(G)|$.
Let $x\in V(G)$ be a simplicial vertex of $G$ and $V(G)=\{x,x_1,\ldots,x_n\}$. If $G$ is a complete graph, then the result is clear. Assume that $G$
  is not a complete graph and without loss of generality let $N_{G}(x)=\{x_1,\ldots,x_k\}$ for some $1\leq k<n$ and $G_0=G\setminus \{x\}$.
Then $I(G^c)=x(x_{k+1},\ldots,x_n)+I(G^c_0)$. Moreover, for any distinct integers $i$ and $j$ with $1\leq i,j\leq k$, since $\{x_i,x_j\}\in E(G)$, then $x_ix_j\notin I(G^c_0)$. So $I(G^c_0)\subseteq (x_{k+1},\ldots,x_n)$. Since $G_0$ is chordal, by induction hypothesis
$I(G^c_0)$ is vertex splittable. Also it is easy to see that any ideal which is generated
 by variables is a vertex splittable ideal. Thus $(x_{k+1},\ldots,x_n)$ is vertex splittable. So $I(G^c)$ is a vertex splittable ideal as desired.
\end{proof}

Edge ideals with linear resolution were characterized in \cite{Fro} as follows.

\begin{thm} \cite[Theorem 1]{Fro}\label{Frochor}
For a graph $G$, the edge ideal $I(G)$ has linear resolution if and only if $G^c$ is a chordal graph.
\end{thm}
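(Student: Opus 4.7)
The plan is to prove both directions separately, exploiting the machinery developed in this paper for the easy direction and classical combinatorial tools (Hochster's formula) for the harder one.

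For the \emph{if} direction, suppose $G^c$ is chordal. Applying Proposition \ref{propchor} to the chordal graph $G^c$ yields that $I((G^c)^c) = I(G)$ is a vertex splittable ideal. Since every minimal generator of $I(G)$ has degree $2$, Corollary \ref{corlinear} immediately forces $I(G)$ to have a linear resolution. This reduces the \emph{if} direction to a one-line application of results already in hand, which is precisely the payoff of the dualization set up in the previous section.

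For the \emph{only if} direction, I would argue by contrapositive and produce an off-strand Betti number of $I(G)$ under the assumption that $G^c$ is not chordal. First, pick an induced cycle $C_n = x_{i_1} x_{i_2} \cdots x_{i_n}$ of length $n \geq 4$ in $G^c$, set $W = \{x_{i_1}, \ldots, x_{i_n}\}$, and note that the induced subgraph $G|_W$ is exactly $\overline{C_n}$. Next, identify the independence complex $\T{Ind}(\overline{C_n})$ explicitly: since $n \geq 4$, the graph $C_n$ is triangle-free, so its cliques have size at most $2$; consequently $\T{Ind}(\overline{C_n})$ is the $1$-dimensional simplicial complex whose facets are exactly the edges of $C_n$. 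Topologically this is $S^1$, so $\tilde{\H}_1(\T{Ind}(\overline{C_n}); k) \cong k$.

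The final step is to feed this into Hochster's formula
$$\beta_{i,j}(R/I_{\T{Ind}(G)}) = \sum_{W' \subseteq V(G),\; |W'| = j} \dim_k \tilde{\H}_{j-i-1}(\T{Ind}(G)|_{W'}; k),$$
which for $j = n$ and $i = n-2$ picks up the contribution from our chosen $W$:
$$\beta_{n-2,\, n}(R/I(G)) \geq \dim_k \tilde{\H}_1(\T{Ind}(\overline{C_n}); k) = 1.$$
A $2$-linear resolution of $I(G)$ would require $\beta_{i,j}(R/I(G)) = 0$ whenever $i \geq 1$ and $j \neq i+1$; but here $n \neq (n-2)+1$, so this Betti number lies off the linear strand, contradicting linearity. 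The main obstacle is the \emph{only if} direction: identifying the right combinatorial witness and carefully bookkeeping the Hochster contribution, especially verifying that $\T{Ind}(G)|_W = \T{Ind}(G|_W) = \T{Ind}(\overline{C_n})$ so that the induced-cycle cohomology really survives as a summand.
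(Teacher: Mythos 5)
Your proof is correct, but note that the paper does not prove this statement at all: it is quoted as Fr\"{o}berg's theorem (\cite[Theorem 1]{Fro}) and used as a black box, so any proof you give is necessarily a different route from the paper's. Your \emph{if} direction is a clean application of the paper's own machinery --- Proposition \ref{propchor} gives that $I(G)=I((G^c)^c)$ is vertex splittable when $G^c$ is chordal, and Corollary \ref{corlinear} then yields linearity since the generators all have degree $2$; this is non-circular (Proposition \ref{propchor} is proved by induction on a simplicial vertex, independently of Fr\"{o}berg) and is in fact the same chain of implications the authors exploit in Corollary \ref{edgesp}. Your \emph{only if} direction is the standard Hochster-formula argument: the indexing checks out ($j-i-1=1$ for $j=n$, $i=n-2$), the identification $\T{Ind}(G)|_W=\T{Ind}(G|_W)=\T{Ind}(\overline{C_n})$ is valid because independence is inherited by induced subgraphs and complementation commutes with restriction, and $\T{Ind}(\overline{C_n})$ is indeed the $1$-complex $C_n\simeq S^1$ precisely because $n\geq 4$ makes $C_n$ triangle-free (for $n=3$ one would get a contractible simplex, which is why chordality only forbids cycles of length at least $4$). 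The Betti number $\beta_{n-2,n}(R/I(G))\neq 0$ with $n\neq n-1$ does sit off the $2$-linear strand, so the contrapositive closes. What your approach buys is a self-contained proof of the cited theorem, with one direction derived from the paper's new results; what the paper's approach buys is brevity, since it only needs the statement as an external input.
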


Mohammadi in \cite{Mohammadi}  proved that for a chordal graph $G$, $\Delta(G)^{\vee}$ is vertex decomposable, where $\Delta(G)$ is the
clique complex of $G$. By means of Theorem \ref{propchor}, we are able to give another proof of this result.

\begin{cor}\label{corchor1}
For a graph $G$ the followings are equivalent.
\begin{itemize}
\item[(i)] $G^c$ is chordal,
\item[(ii)] $I(G)$ is vertex splittable,
\item[(iii)] $\Delta_G^{\vee}$ is vertex decomposable,
\item[(iv)] $\Delta_G^{\vee}$ is Cohen-Macaulay.
\end{itemize}
\end{cor}
\begin{proof}
(i)$\Rightarrow$ (ii)  is the statement of Theorem \ref{propchor}.

 (ii)$\Rightarrow$ (iii) follows from Theorem \ref{vI}, noting the fact that $I_{\Delta_G}=I(G)$ and $(\Delta_G^{\vee})^{\vee}=\Delta_G$.

(iii)$\Rightarrow$ (iv) Since $\Delta_G^{\vee}=\langle V(G)\setminus \{x,y\}:\ \{x,y\}\in E(G)\rangle$, it is a pure vertex decomposable simplicial complex and so it is Cohen-Macaulay.

(iv)$\Rightarrow$ (i) By Theorem \ref{EG}, $I_{\Delta_G}=I(G)$ has a linear resolution. So by Theorem \ref{Frochor}, $G^c$ is a chordal graph.
\end{proof}

\providecommand{\bysame}{\leavevmode\hbox
to3em{\hrulefill}\thinspace}

\end{document}